\begin{document}

\title[Scale-free laws via thinning]{Scale-free and power law distributions via 
fixed points and convergence
of  (thinning and conditioning) transformations
}
 
\author[Arratia]{Richard Arratia}
\address[Richard Arratia]{Department of Mathematics, University of Southern California,
Los Angeles CA 90089.}
\email{rarratia@usc.edu}
\urladdr{http://dornsife.usc.edu/cf/faculty-and-staff/faculty.cfm?pid=1003062}

\author[Liggett]{Thomas M. Liggett}
\address[Thomas M. Liggett]{Department of Mathematics, University of California
Los Angeles CA 90095-1555.}
\email{tml@math.ucla.edu}
\urladdr{http://www.math.ucla.edu/~tml/}
 
\author[Williamson]{Malcolm J. WIlliamson}
\address[Malcolm J. Williamson]{Center for Communications Research, 4320 Westerra Ct., 
San Diego, CA 92121-1969.}
\email{malcolm@ccrwest.org}
 
\subjclass[2010]{Primary 60B10; Secondary 05C82}
 
\keywords{thinning, power-law, scale-free, degree distribution, Pareto distribution}
   
\date{May 27, 2014}

\newtheorem{theorem}{Theorem}
\newtheorem{lemma}[theorem]{Lemma}

\newtheorem{prop}[theorem]{Proposition}
\newtheorem{cor}[theorem]{Corollary}
\theoremstyle{definition}
\newtheorem{remark}{Remark}

\newcommand{\ignore}[1]{}
\def\e{\mathbb{E\,}}
\def\p{\mathbb{P}}
\def\er{Erd\H{o}s-R\'enyi }
\def\Dfixed{D^{\rm fixed}_\beta}
\def\M{G}  
\def\x{s}   

\begin{abstract} 
In discrete contexts such as the degree distribution for a graph,
\emph{scale-free} has traditionally been \emph{defined} to be \emph{power-law}.
We propose a reasonable interpretation of \emph{scale-free}, namely, invariance under the  transformation of $p$-thinning, followed by conditioning on being positive.

For each $\beta \in (1,2)$, we show that there is a unique distribution which is a fixed point of this transformation;
the distribution is power-law-$\beta$, and different from the usual Yule--Simon power law-$\beta$ that arises in
preferential attachment models.

In addition to characterizing these fixed points, we prove convergence results for iterates of the transformation.
\end{abstract}
 
\maketitle


\section{Introduction and statement of results}

In the context of of random graphs, many authors define the term {\it scale-free}
to mean that the degree distribution follows a power law -- see for example \cite{ab,math}.
In this paper, we adopt a different point of view, in which scale-free means that the
degree distribution is invariant under a natural transformation on the graph. As we will see, the power law property is
then a consequence of this definition.

To motivate our transformation, consider a continuous random variable $X\geq 1$.
It appears natural to say that its distribution is scale-free if $cX$ conditioned on $cX\geq 1$
has the same same distribution as $X$, i.e.,
$$\p(X\geq x)=\p(cX\geq x\mid cX\geq 1).$$
It is not hard to check that the only such distributions are the Pareto
distributions
$$\p(X\geq x)=x^{-\alpha},\quad x\geq 1.$$
See \cite{krishnaji,patil} for similar observations. One can also consider convergence to these fixed points, and
easily show that
$$\lim_{c\rightarrow 0}\p(cX\geq x\mid cX\geq 1)=x^{-\alpha},\quad x\geq 1$$
if and only if the tail probabilities $\p(X\geq x)$ are of the form $L(x)x^{-\alpha}$, where $L$ is slowly varying.

We consider now a discrete analogue of this setup. If $D$ is a nonnegative integer valued random variable, 
$cD$ is no longer integer valued, so we replace multiplication by thinning. A $p-$thinning of $D$ is
defined by
\begin{equation}\label{pthin}S_D=\sum_{i=1}^DX_i,\end{equation}
where $X_i$ are i.i.d. Bernoulli ($p$) random variables that are independent of $D$. In terms of the probability generating function $G_D(s)=\e s^D$, this becomes
\begin{equation}\label{M thinned} G_{S_D}(s)=G_D(1-p+ps)=G_D(1-p(1-s)).\end{equation}
In the graph context, this corresponds to thinning by edges.

We are concerned here
with fixed points of the transformation $T=T_p=T_{p,m}$ given by
$$T:D\rightarrow (S_D\mid S_D\geq m)$$
where $m$ is an integer $\geq 1$, and convergence to these fixed points. (The case $m=1$ is the most natural.)

There are other contexts in which fixed points and convergence of transformations that are the composition of two operations that change a distribution in opposite directions have been studied. Examples are 
\cite{aldous,fixed}.

Similar questions
for other families of transformations acting on discrete distributions have been studied before
-- see \cite{Bouzar,Davydov,Vervaat} for example. The main feature that distinguishes our setting from these others
is the conditioning. 

 We will use two forms
of the power-law-$\beta$ property:
\begin{align}\p(D=k)&\sim ck^{-\beta},\label{point}\\\p(D\geq n)&\sim L(n)n^{1-\beta}
\label{tail slow}\end{align}
where $\beta>1$ and $L$ is slowly varying. The latter property is known as regular variation.
Our characterization of fixed points is the following. It is proved in Section \ref{sect unique}.

\begin{theorem}\label{thm unique}
Let $m$ be a positive integer, and let $D$
be a nonnegative integer valued random variable,
with $\p(D \ge m)>0$.
The following are equivalent:

\begin{itemize}

\item  The distribution of $D$ 
is fixed by the transformation $D \mapsto T_{p,m} D$  for all $p \in (0,1)$.

\item Either
$D \equiv m$ is constant, or else $D$ has power-law-$\beta$ distribution \eqref{point},
with $\beta=\alpha+1$, $0 <\alpha <m$, 
$\p(D<m)=0$, and
\begin{equation}\label{unique display}
 \p(D=k+1)/\p(D=k)=(k-\alpha)/(k+1)  \text{ for } k \ge m.
\end{equation}
\end{itemize} 

\end{theorem}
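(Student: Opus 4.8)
The plan is to work entirely with probability generating functions, since the thinning operation \eqref{M thinned} is linear in that representation. Writing $G(s) = \e s^D$, the $p$-thinning has generating function $G(1-p(1-s))$, and conditioning on $S_D \ge m$ amounts to subtracting off the polynomial part of degree $< m$ and renormalizing. So I would first set up the fixed-point condition precisely: the distribution is fixed by $T_{p,m}$ for all $p$ if and only if, for every $p \in (0,1)$, the function $G(1-p(1-s))$ agrees with $c(p)\,G(s) + (\text{a polynomial of degree} < m \text{ in } s)$ for a suitable normalizing constant $c(p)$. The strategy is to differentiate this identity $m$ times in $s$ to kill the low-degree polynomial, obtaining a clean functional equation relating $G^{(m)}(1-p(1-s))$ to $G^{(m)}(s)$.

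Next I would exploit the fact that the equation must hold for \emph{all} $p$ simultaneously. The natural move is to differentiate with respect to $p$ as well, or equivalently to introduce a multiplicative semigroup structure: writing $u = 1-s$ so that thinning sends $u \mapsto pu$, the condition becomes a genuine scaling relation in the variable $u$, and the family indexed by $p$ becomes a one-parameter multiplicative group action. This should force the $m$-th derivative $H(s) := G^{(m)}(s)$ to satisfy a homogeneity relation of the form $H(1-pu) = \phi(p)\,H(1-u)$, whose only solutions (under the regularity automatically enjoyed by a power series convergent on $[0,1]$) are pure powers: $H(1-u) = A\,u^{-\gamma}$ for some constants $A, \gamma$, equivalently $G^{(m)}(s) = A(1-s)^{-\gamma}$. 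Integrating $m$ times and matching against the requirement that $G$ be a valid generating function (Taylor coefficients nonnegative, $G(1)=1$) recovers either the constant case $D \equiv m$ or the power law. Reading off the coefficients of $A(1-s)^{-\gamma}$ via the generalized binomial series gives ratios of consecutive coefficients of exactly the form $(k-\alpha)/(k+1)$, and comparing with the target \eqref{unique display} pins down $\gamma = m-\alpha$ and the asymptotics $\p(D=k) \sim ck^{-\beta}$ with $\beta = \alpha+1$; the constraints $0 < \alpha < m$ and $\p(D<m)=0$ emerge from demanding convergence of the normalization and nonnegativity of all coefficients.

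The two directions then need to be closed off. For the converse (that the displayed power law is genuinely fixed), I would simply verify directly that $G^{(m)}(s) = A(1-s)^{-(m-\alpha)}$ produces a generating function whose $p$-thinning, after stripping the first $m$ coefficients and renormalizing, reproduces itself; this is a routine check once the closed form is in hand, because the scaling $u \mapsto pu$ manifestly preserves the class of functions $A(1-s)^{-\gamma}$ modulo lower-order polynomials. I also need to confirm that the low-degree coefficients $\p(D=k)$ for $k<m$ must vanish: the conditioning on $S_D \ge m$ means the image of $T_{p,m}$ is always supported on $\{m,m+1,\dots\}$, so any fixed point automatically has $\p(D<m)=0$, which is the cleanest part of the argument and worth stating first.

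I expect the main obstacle to be the step that upgrades ``holds for all $p$'' into the homogeneity/pure-power conclusion rigorously, rather than just formally. The delicate points are: justifying differentiation in $p$ and interchanging it with the power series (legitimate on compact subsets of the open interval, but it must be argued); handling the normalizing constant $c(p)$, which must be shown to be multiplicative in $p$ and hence of the form $p^{\gamma}$, and ensuring $\gamma$ is consistent across all $p$; and controlling the unknown polynomial correction of degree $<m$ uniformly in $p$ so that the $m$-fold differentiation cleanly eliminates it. Getting the boundary behavior right at $s=1$ (equivalently $u=0$), where $G^{(m)}$ may blow up precisely when $\alpha < m$, is what separates the genuine power-law fixed point from spurious solutions, so that is where I would concentrate the careful estimates.
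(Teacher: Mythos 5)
Your proposal is correct and takes essentially the same approach as the paper: you eliminate the conditioning by differentiating the generating function $m$ times to kill the degree-$<m$ polynomial (the paper's Lemma \ref{trivial lemma}), reduce the fixed-point condition to a multiplicative scaling relation in $u=1-s$ whose continuous positive solutions are pure powers $u^{-d}$ (Lemma \ref{linear lemma}), and then read off the coefficient ratios and the constraints $0<\alpha<m$ from nonnegativity and summability. The technical worries you flag about differentiating in $p$ and about the multiplicativity of $c(p)$ dissolve in the paper's treatment, since substituting $t=1$ into $g(pt)=c(p)g(t)$ identifies $c(p)=g(p)$ directly, and continuity of the power series settles the resulting Cauchy functional equation without any differentiation in $p$.
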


For the convergence results, we consider separately the cases of nontrivial and trivial fixed points.
For the motivation for taking $p\downarrow 0$ in these results, see Remark \ref{sect iterate} in the next section.

\begin{theorem}\label{main theorem} Suppose 
the distribution of $D$ is power-law-$\beta$, as specified by
 \eqref{tail slow}.
Then for every integer
$k \ge \beta$
\begin{equation}\label{was 3}
\lim_{p\rightarrow 0+}\frac{\p(S_D= k)}{\p(S_D=k-1)}=\frac{k-\beta}k.
\end{equation}
\end{theorem}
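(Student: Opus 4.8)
The plan is to reduce everything to the behaviour of the probability generating function $G_D$ and its derivatives near $s=1$, and then to read that behaviour off from the tail regular variation \eqref{tail slow}. First I would record the exact identity that drives the argument. From \eqref{M thinned} we have $G_{S_D}(s)=G_D(1-p(1-s))$, so differentiating $k$ times in $s$ and setting $s=0$ gives $G_{S_D}^{(k)}(0)=p^k G_D^{(k)}(1-p)$, i.e.
\[
\p(S_D=k)=\frac{p^k}{k!}\,G_D^{(k)}(1-p),\qquad k\ge 0 .
\]
Hence the quantity in \eqref{was 3} is exactly $\p(S_D=k)/\p(S_D=k-1)=\tfrac{p}{k}\,G_D^{(k)}(1-p)/G_D^{(k-1)}(1-p)$, and writing $s=1-p$ it suffices to prove that $(1-s)\,G_D^{(k)}(s)/G_D^{(k-1)}(s)\to k-\beta$ as $s\uparrow 1$.

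Next I would extract the near-$1$ behaviour of the derivatives of $G_D$ from \eqref{tail slow}. Set $\rho=\beta-1\in(0,\infty)$, so \eqref{tail slow} reads $\p(D\ge n)\sim L(n)\,n^{-\rho}$. Using the identity $\sum_{n\ge0}\p(D>n)s^n=(1-G_D(s))/(1-s)$ together with an Abelian theorem for power series with regularly varying coefficients, one obtains the asymptotics of $1-G_D(s)$ as $s\uparrow1$; passing to derivatives then yields, for every real order $j\ge\rho$,
\[
G_D^{(j)}(s)\sim c_j\,(1-s)^{\rho-j}\,L\!\left(\tfrac1{1-s}\right),\qquad s\uparrow 1,
\]
with $c_j=-(-1)^j\,\Gamma(1-\rho)\,\Gamma(\rho+1)/\Gamma(\rho-j+1)$ the constant produced by formally differentiating the leading term $\Gamma(1-\rho)(1-s)^{\rho}$ of $1-G_D(s)$. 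The hypothesis $k\ge\beta$, i.e. $k-1\ge\rho$, is exactly what places both orders $j=k-1$ and $j=k$ in this singular regime; for $k<\beta$ the factorial moment $\e[(D)_{k-1}]=G_D^{(k-1)}(1)$ would be finite and nonzero and the ratio would behave differently.

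Given the two–term asymptotics the conclusion is immediate: the slowly varying factor $L(1/(1-s))$ cancels, the powers of $(1-s)$ combine to $(1-s)^{-1}$, and the constants reduce via $\Gamma(\rho-k+2)=(\rho-k+1)\Gamma(\rho-k+1)$ to
\[
\frac{G_D^{(k)}(s)}{G_D^{(k-1)}(s)}\sim \frac{c_k}{c_{k-1}}\,(1-s)^{-1}=(k-1-\rho)\,(1-s)^{-1}=(k-\beta)\,(1-s)^{-1}.
\]
Multiplying by $(1-s)/k=p/k$ gives the claimed limit $(k-\beta)/k$.

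The genuinely delicate step is the second one: passing from the \emph{tail} regular variation \eqref{tail slow} to asymptotics for the \emph{derivatives} $G_D^{(j)}$. Since \eqref{tail slow} controls only $\p(D\ge n)$ and not the individual masses $\p(D=n)$, the coefficients $(n)_j\,\p(D=n)$ of $G_D^{(j)}(s)=\sum_{n\ge j}(n)_j\,\p(D=n)\,s^{n-j}$ need not be regularly varying, so one cannot apply a coefficientwise Abelian theorem to the differentiated series directly. I would circumvent this in one of two equivalent ways: (i) apply the Abelian theorem only to the tail–sum series $\sum\p(D>n)s^n$, whose coefficients \emph{are} regularly varying, and then justify the differentiation via the monotone density theorem, which is legitimate here because each derivative series has nonnegative coefficients; or (ii) replace $G_D^{(j)}(1-p)=\e[(D)_j(1-p)^{D-j}]$ by the Laplace-type moment $\e[D^j e^{-pD}]$, whose $p\downarrow0$ asymptotics follow from Karamata's Tauberian theorem applied to the incomplete moments $\int_0^t x^j\,dF(x)\sim \tfrac{\rho}{j-\rho}\,t^{\,j-\rho}L(t)$, while separately bounding the error between the binomial weight $(D)_j(1-p)^{D-j}$ and the weight $D^j e^{-pD}$ on the relevant scale $D\asymp 1/p$. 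Along the way I would track the several ranges of $\beta$ (the order of the first singular derivative of $G_D$ crosses an integer as $\rho$ does) and treat the borderline integer case $k=\beta$ separately, where $G_D^{(k-1)}$ is slowly varying of index $0$ and the stated limit $(k-\beta)/k=0$ reflects $\p(S_D=k)=o(\p(S_D=k-1))$.
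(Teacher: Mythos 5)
Your reduction to the behaviour of $G_D^{(k)}$ near $s=1$ is correct ($\p(S_D=k)=p^kG_D^{(k)}(1-p)/k!$ is exactly the paper's identity \eqref{was 1} in disguise), and the target asymptotics $G_D^{(j)}(s)\sim c_j(1-s)^{\rho-j}L(1/(1-s))$, with $c_k/c_{k-1}=k-1-\rho=k-\beta$, is the same intermediate statement the paper establishes. You also correctly locate the crux: passing from regular variation of the tail $\p(D\ge n)$ to asymptotics of high derivatives of $G_D$. But you do not close that step, and your route (i) is flawed as stated: the claim that $1-G_D(s)$ has leading term $\Gamma(1-\rho)(1-s)^{\rho}$ is true only for $1<\beta<2$. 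For $\beta\ge 2$ one has $1-G_D(s)\sim \e D\,(1-s)$, the singular piece $(1-s)^{\rho}L$ is a lower-order correction, and ``formally differentiating the leading term'' tells you nothing about $G_D^{(k)}$ for $k\ge\beta$. To make route (i) work you would have to iterate the monotone density theorem derivative by derivative starting from the first order $j_0>\rho$, and that base case still needs an argument of the type in your route (ii); route (ii) in turn hinges on the comparison between the weights $(D)_j(1-p)^{D-j}$ and $D^je^{-pD}$ on the scale $D\asymp 1/p$, which you assert but do not carry out. So the proposal is a plausible plan arriving at the right answer, not yet a proof.

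The paper closes this gap with a single device that avoids all of your case analysis over ranges of $\beta$: in the sum $\sum_{l\ge k}(l)_k(1-p)^l\p(D=l)$ from \eqref{was 1}, write $\p(D=l)=H(l)-H(l+1)$ with $H(l)=\p(D\ge l)=l^{1-\beta}L(l)$ and sum by parts. The two resulting series have coefficients $(l-1)_{k-1}H(l)\sim l^{k-\beta}L(l)$ and $(l)_kH(l)\sim l^{k+1-\beta}L(l)$, to which Feller's Abelian theorem applies directly as soon as $k-\beta+1>0$ --- the polynomial factors lift the index into the admissible range no matter how large $\beta$ is, so no Tauberian condition, no monotone density theorem, and no splitting of the range of $D$ is needed. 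The two $\Gamma$-factors combine to give $\Gamma(k-\beta+1)(\beta-1)p^{\beta-k-1}L(p^{-1})$, whence \eqref{was 3} for $k>\beta$; the borderline integer $k=\beta$ is handled separately there too, much as you anticipate. If you want to salvage your derivative formulation, the cleanest repair is exactly this summation by parts, applied to the series for $G_D^{(k)}$ itself rather than to $1-G_D$.
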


\begin{theorem}\label{thm converge 2}
Take $m\geq\beta-1$, and suppose the distribution of 
$D$ is such that \eqref{was 3} holds
for $k\geq \beta$. Then the distributions of
$(S_D\mid S_D\geq m)$ are tight as $p\downarrow 0$. It follows that these
distributions have a limit as $p\downarrow 0$, which is the fixed point
described in \eqref{unique display} in case $\beta< m+1$, or $\p(D=m)=1$ in case $\beta=m+1$.
\end{theorem}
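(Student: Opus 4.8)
The plan is to read off the limiting law from the ratios supplied by \eqref{was 3}, and then to show that no mass escapes to infinity; tightness is the only real issue. Write $q_k(p)=\p(S_D=k)$ and $N(p)=\p(S_D\ge m)=\sum_{k\ge m}q_k(p)$, so the conditioned law assigns mass $q_k(p)/N(p)$ to each $k\ge m$. Since $m\ge\beta-1$, every index $k\ge m+1$ satisfies $k\ge\beta$, so \eqref{was 3} applies to each of the ratios $q_{m+1}/q_m,q_{m+2}/q_{m+1},\dots$. Taking products, for each fixed $k\ge m$,
\begin{equation*}
\frac{q_k(p)}{q_m(p)}\ \longrightarrow\ \rho_k:=\prod_{j=m+1}^{k}\frac{j-\beta}{j}\qquad(p\downarrow0),
\end{equation*}
with $\rho_m=1$. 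A short estimate gives $\log\rho_k=-\beta\log k+O(1)$, so $\rho_k\asymp k^{-\beta}$ and $R:=\sum_{k\ge m}\rho_k<\infty$ precisely because $\beta>1$. Setting $\pi_k=\rho_k/R$ one checks $\pi_{k}/\pi_{k-1}=(k-\beta)/k$ for $k\ge m+1$, which is exactly \eqref{unique display} with $\alpha=\beta-1$; when $\beta=m+1$ the factor $j=m+1$ vanishes, forcing $\rho_k=0$ for $k>m$ and $\pi=\delta_m$. Thus $m\ge\beta-1$ is precisely what makes every $\rho_k\ge0$ and $\pi$ a genuine probability law, namely the announced fixed point.

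Granting tightness, the conclusion is routine. By Fatou, $\liminf_{p\downarrow0}N(p)/q_m(p)\ge\sum_k\rho_k=R$. Tightness will be used in the form
\begin{equation}\label{star}
\lim_{K\to\infty}\ \limsup_{p\downarrow0}\ \sum_{k>K}\frac{q_k(p)}{q_m(p)}=0,
\end{equation}
which lets me play the head sum $\sum_{m\le k\le K}q_k/q_m\to\sum_{m\le k\le K}\rho_k$ against a negligible tail and conclude $\limsup_{p\downarrow0}N(p)/q_m(p)\le R$. Hence $N(p)/q_m(p)\to R$, and therefore $q_k(p)/N(p)\to\pi_k$ for every $k$; since $\sum_k\pi_k=1$ this is convergence in distribution to $\pi$. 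So everything reduces to \eqref{star}.

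Tightness is the main obstacle, and it cannot follow from \eqref{was 3} alone: pointwise convergence of the ratios is consistent with a unit of mass escaping near $k\approx1/p$, and only the thinning structure forbids this. My plan is to exploit the exact identity
\begin{equation*}
\frac{q_k(p)}{q_{k-1}(p)}=\frac{p}{k}\,\frac{G_D^{(k)}(1-p)}{G_D^{(k-1)}(1-p)}=\frac{p}{k(1-p)}\bigl(\e[\,D\mid S_D=k-1\,]-(k-1)\bigr),
\end{equation*}
coming from $q_k(p)=\tfrac{p^k}{k!}G_D^{(k)}(1-p)$ and the posterior description of $D$ given $S_D=k-1$. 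The aim is a bound of the shape $q_k(p)/q_{k-1}(p)\le(k-\beta+\varepsilon)/k$ valid for all large $k$ uniformly in small $p$, so that $q_k(p)/q_m(p)$ is dominated by a summable sequence $\asymp k^{-(\beta-\varepsilon)}$ and \eqref{star} follows. Concretely I would translate \eqref{was 3} into regular variation of $G_D$ at $1$ and apply a Tauberian estimate for the derivatives $G_D^{(k)}(1-p)$ that is uniform in $(k,p)$; equivalently, bound the posterior mean $\e[\,D\mid S_D=k-1\,]$ from above uniformly. The delicate point is exactly this uniformity across the window $k\asymp1/p$, where \eqref{was 3} by itself says nothing.

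A clean alternative that sidesteps a separate tightness argument is to work with generating functions. Using $\e\,\x^{S_D}=G_D(1-p(1-\x))$ and subtracting the first $m$ terms, one expands $G_D(1-u)$ near $u=0$: the analytic part contributes $m$ integer powers and the leading non-analytic term is $\asymp u^{\beta-1}$. After conditioning these combine so that $\e[\x^{S_D}\mid S_D\ge m]$ converges, as $p\downarrow0$, to a limit that is itself a probability generating function (for $m=1$ and $\beta\in(1,2)$ this limit is $1-(1-\x)^{\beta-1}$, the Sibuya law). Because the limit is a proper generating function, convergence on $[0,1)$ yields convergence in distribution with tightness automatic, and the limit is $\pi$. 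Either way the crux is controlling $G_D$, or its derivatives, near $\x=1$ with enough uniformity; that estimate is the heart of the proof.
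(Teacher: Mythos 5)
Your identification of the limit law and your reduction of the convergence statement to tightness are both correct, and they match the paper's framing. But the tightness step --- which you rightly call the heart of the proof --- is not actually supplied, and neither of your two proposed routes closes it under the stated hypothesis. Route (a) asks for a bound on $\p(S_D=k)/\p(S_D=k-1)$ of the form $(k-\beta+\varepsilon)/k$ \emph{uniformly} over large $k$ and small $p$; but the hypothesis of the theorem is only the pointwise limit \eqref{was 3} for each fixed $k$, and as you yourself note this says nothing about the window $k\asymp 1/p$. Route (b) would translate \eqref{was 3} into regular variation of $G_D$ near $1$; that is essentially re-assuming \eqref{tail slow}, which is the hypothesis of Theorem \ref{main theorem}, not of Theorem \ref{thm converge 2}. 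The theorem is deliberately stated under the weaker ratio hypothesis, so this route changes the statement being proved.

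The idea you are missing is that one never needs uniformity in $k$. The paper expresses tightness as
$\lim_{k\to\infty}\limsup_{p\to 0+}\p(S_D\geq k)/\p(S_D\geq m)=0$,
so it suffices to compute, for each \emph{fixed} $k$, the limit of the tail ratio $\p(S_D\geq k)/\p(S_D\geq k-1)$. This is done via the exact identity
\begin{equation*}
\frac{d}{dp}\,\p(S_D\geq k)=k\,p^{-1}\,\p(S_D=k)
\end{equation*}
(a simple instance of Russo's formula, equation \eqref{was 6} of the paper), which together with L'Hospital's rule converts the point-probability ratio limits \eqref{was 3} into tail-probability ratio limits: $\p(S_D\geq k)/\p(S_D\geq k-1)\to(k-\beta)/(k-1)$, which is \eqref{was 9}. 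Multiplying these for $j=1,\dots,k$ gives $\limsup_{p\to 0+}\p(S_D\geq m+k)/\p(S_D\geq m)=\prod_{j=1}^{k}(m+j-\beta)/(m+j-1)$, and this product tends to $0$ as $k\to\infty$ because $\sum_j(\beta-1)/(m+j-1)=\infty$. That single differentiation identity is what lets the fixed-$k$ information in \eqref{was 3} control the tails without any uniform estimate on $G_D$ or its derivatives; without it (or an equivalent device) your argument does not go through.
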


\begin{theorem}\label{theorem trivial}Suppose $ED^{k-1}<\infty$. Then
\begin{equation}
\lim_{p\rightarrow 0}\frac{\p(S_D\geq k)}{\p(S_D=k-1)}=0,\end{equation}
provided that the denominator above is strictly positive.
As a consequence, if $ED^m<\infty$ and $\p(D\geq m)>0$, then
$$\lim_{p\rightarrow 0}\p(S_D=m\mid S_D\geq m)=1.$$
\end{theorem}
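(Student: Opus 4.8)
The plan is to estimate the numerator $\p(S_D\ge k)$ and denominator $\p(S_D=k-1)$ separately according to their order in $p$ as $p\downarrow 0$, showing that the denominator has exact order $p^{k-1}$ while the numerator is of strictly smaller order $o(p^{k-1})$; the ratio then tends to $0$.

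For the denominator I would condition on $D$ and use the binomial representation
\[
\p(S_D=j)=\sum_{n\ge j}\p(D=n)\binom nj p^j(1-p)^{n-j},
\qquad\text{so}\qquad
\frac{\p(S_D=j)}{p^j}=\sum_{n\ge j}\p(D=n)\binom nj(1-p)^{n-j}.
\]
Since $(1-p)^{n-j}\uparrow 1$ as $p\downarrow 0$, monotone convergence gives $\lim_{p\to 0}\p(S_D=j)/p^j=\e\binom Dj$. Taking $j=k-1$, the hypothesis $\e D^{k-1}<\infty$ makes this limit finite, while the assumed strict positivity of $\p(S_D=k-1)$ forces $\p(D\ge k-1)>0$ and hence $\e\binom D{k-1}>0$. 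Thus $\p(S_D=k-1)\sim \e\binom D{k-1}\,p^{k-1}$ with a positive constant, and it remains only to prove $\p(S_D\ge k)=o(p^{k-1})$.

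For the numerator I would again condition on $D$ and use the union bound: the event $\{S_D\ge k\}$ is covered by the $\binom Dk$ events that a prescribed set of $k$ of the thinning trials all succeed, each of probability $p^k$, so
\[
\p(S_D\ge k\mid D=n)=\p(\mathrm{Bin}(n,p)\ge k)\le \binom nk p^k,
\]
together with the trivial bound $\le 1$. The main obstacle is that we only assume $\e D^{k-1}<\infty$, so $\e\binom Dk$ may be infinite and the naive estimate $\p(S_D\ge k)\le \e\binom Dk\,p^k$ is useless. I would resolve this by interpolating the two bounds: writing $a=\binom nk$, a short case split according to whether $ap^k\le 1$ or $ap^k>1$ shows
\[
\frac{\min(1,\,ap^k)}{p^{k-1}}\le a^{(k-1)/k}=\binom nk^{(k-1)/k}\le n^{k-1},
\]
uniformly in $p\in(0,1)$. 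Since $\sum_n\p(D=n)\,n^{k-1}=\e D^{k-1}<\infty$ furnishes a $p$-independent integrable dominating function, while for each fixed $n$ the integrand $\p(\mathrm{Bin}(n,p)\ge k)/p^{k-1}\le \binom nk\,p\to 0$, dominated convergence yields $\p(S_D\ge k)/p^{k-1}\to 0$.

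Combining the two estimates gives $\p(S_D\ge k)/\p(S_D=k-1)\to 0$, which is the first assertion. For the stated consequence I would apply this with $k=m+1$: the hypothesis $\e D^m<\infty$ supplies $\e D^{k-1}<\infty$, and $\p(D\ge m)>0$ guarantees (via the denominator asymptotic above) that $\p(S_D=m)>0$ for all small $p$. Writing
\[
\p(S_D=m\mid S_D\ge m)=\Bigl(1+\frac{\p(S_D\ge m+1)}{\p(S_D=m)}\Bigr)^{-1}
\]
and applying the first part to the displayed ratio then gives the limit $1$.
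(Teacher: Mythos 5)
Your proposal is correct, and its overall skeleton matches the paper's: both establish $\p(S_D=k-1)\sim \e\!\binom{D}{k-1}p^{k-1}$ by conditioning on $D$ and passing to the limit, then reduce everything to showing $\p(S_D\ge k)=o(p^{k-1})$ via dominated convergence with a dominating function proportional to $l^{k-1}$, which is integrable exactly under the hypothesis $\e D^{k-1}<\infty$. Where you differ is in how the dominating bound is obtained. The paper bounds the binomial tail $\p(\mathrm{Bin}(l,p)\ge k)\le C(lp)^k$ by citing a Chernoff-type inequality from the literature (Hagerup--R\"ub), and then massages it (splitting off the $n=k-1$ term and applying the bound with $k$ replaced by $k-1$) to get the needed $C(lp)^{k-1}$ domination as well as the pointwise $o(p^{k-1})$ statement. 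You instead interpolate between the union bound $\binom nk p^k$ and the trivial bound $1$, checking directly that $\min\bigl(1,\binom nk p^k\bigr)\le p^{k-1}\binom nk^{(k-1)/k}\le p^{k-1}n^{k-1}$ uniformly in $p$, and get the pointwise convergence for free from $\binom nk p\to 0$. Your version is entirely self-contained and slightly cleaner in that a single elementary inequality delivers both the domination and the pointwise limit; the paper's version leans on a standard concentration bound and so generalizes more readily if one wanted sharper tail control. Both are complete proofs, and your handling of the final consequence (taking $k=m+1$ and writing the conditional probability as $\bigl(1+\p(S_D\ge m+1)/\p(S_D=m)\bigr)^{-1}$) matches the paper's.
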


These three results are proved in Sections \ref{sect converge} and \ref{sect trivial}.
In the final section, we prove that the nontrivial fixed points are infinitely divisible. 

\section{The tranformations $T_{p,m}$ and their fixed points}

If $D$ is a nonnegative integer valued random variable and $0<p<1$,
the $p$-thinning $S_D$ of $D$, defined by \eqref{pthin}, has,
using  the notation $(z)_k = z(z-1)\cdots (z- k + 1)$ for the falling
product,
\begin{equation}\label{was 1}\begin{aligned}
 \p(S_D=n)&=
\sum_{l=n}^{\infty} \p(D=l) \binom lnp^n(1-p)^{l-n}\\&
=\bigg(\frac p{1-p}\bigg)^n\frac 1{n!}\sum_{l=n}^{\infty}
(l)_n(1-p)^l\p(D=l).\end{aligned}
\end{equation}
\ignore{
is the random variable given by
$$S_D=X_1+\cdots +X_D,$$
where $X_1, X_2,...$ are i.i.d. random variables (also
independent of $D$) with $\p(X_i=1)=p,
\p(X_i=0)=1-p$. The distribution of $S_D$ is then given by
\begin{multline}\label{was 1}
 \p(S_D=n)=
\sum_{l=n}^{\infty} \p(D=l) \binom lnp^n(1-p)^{l-n}\\
=\bigg(\frac p{1-p}\bigg)^n\frac 1{n!}\sum_{l=n}^{\infty}
(l)_n(1-p)^l\p(D=l).\\
\end{multline}
}

Fix an integer $m=1,2,\ldots$.  For $p \in (0,1)$, the transformations $T
\equiv T_p \equiv T_{p,m}$ 
for which we consider fixed points and convergence
of iterates are given by
\begin{equation}\label{def T}
\p(TD=l)=\p(S_D=l\mid S_D\geq m).
\end{equation}

In Section \ref{sect unique}, we will prove that the fixed points of the transformation
are precisely those 
described 
by \eqref{G at m=1} -- \eqref{our ratio}
below, and in Section \ref{sect converge} and \ref{sect trivial}
we will prove results where these fixed points arise as limits of iterates of the transformation.

\begin{remark}
We are referring here to distributions that are fixed points for all
$p$, not just for some $p$. It would be interesting to know whether
these are the only fixed points for a given $p$.
\end{remark}

For $m=1,2,\ldots$, the distribution with  $\p(D=m)=1$ is a trivial fixed point of $T_{p,m}$. 
For $m=1$, all nontrivial fixed points have the form:  for some $\alpha \in (0,1)$, 
\begin{equation}\label{G at m=1}
  G_D(s) := \e s^D = 1-(1-s)^{\alpha} =: \sum_{k \ge 0} c_k(\alpha) s^k.
\end{equation}
The right hand side of \eqref{G at m=1} \emph{defines} $c_k(\alpha)$ to be the coefficient of $s^k$ in $1-(1-s)^{\alpha}$, 
so 
\ignore{
$$
c_0(\alpha) = 0, c_1(\alpha) =\alpha, c_2(\alpha) =\frac{\alpha(1-\alpha)}{2},c_3(\alpha) =\frac{\alpha(1-\alpha)(2-\alpha)}{6};
$$
for $k \ge 1$, $c_k(\alpha) = (-1)^{k-1}(\alpha)_k/k!$, 
\begin{equation}\label{density at m=1}
  c_k(\alpha) = \frac{(-1)^{k-1}(\alpha)_k}{k!};
\end{equation}
and then specifically for $m=1$, with the restriction $\alpha \in (0,1)$,
$$
   \p(D=k)=c_k(\alpha), \ k=1,2,\ldots \ .
$$
}
for $k \ge 1$, $c_k(\alpha) = (-1)^{k-1}(\alpha)_k/k!$, and for $m=1$, with the restriction $\alpha \in (0,1)$,
$   \p(D=k)=c_k(\alpha), \ k=1,2,\ldots \ .$

In general, for $m=1,2,\ldots$ and $\alpha \in (0,m)$ there is a nontrivial fixed point for $T_{p,m}$, 
which is power-law-$\beta$ for $\beta = 1+\alpha$, with
\begin{equation}\label{G at m}
  G_D(s) := \e s^D = \frac{1-(1-s)^{\alpha} - \sum_{1 \le k < m} c_k(\alpha) s^k }{1-\sum_{1 \le k < m} c_k(\alpha)},
\end{equation}  
and this gives all nontrivial fixed points 
of $T_{p,m}$.
\ignore{ 
Thus, the special case $m=1$ of \eqref{G at m} was given by \eqref{G at m=1}, under the restriction $0 < \alpha < 1$; the special case $m=2$ of \eqref{G at m=1} is
$$
G_D(s) := \e s^D = \frac{1-(1-s)^{\alpha} - \alpha s}{1-\alpha},  \text{ for } 0 < \alpha <2;
$$
and the special case $m=3$ of \eqref{G at m=1} is
$$
G_D(s) := \e s^D = \frac{1-(1-s)^{\alpha} - \alpha s - \frac{\alpha(1-\alpha)}2 s^2}{1-\alpha- \frac{\alpha(1-\alpha)}2} ,   \text{ for } 0 < \alpha <3.
$$
}
A unified description of the fixed points (for all $p$)  of $T_{p,m}$, including both the trivial fixed point, obtained by taking $\alpha = m$, is:  $1+\alpha =\beta \in (1,m+1]$, $\p(D \in \{m,m+1,m+2,\ldots \})=1$, $\p(D=m)>0$, and
\begin{equation}\label{our ratio alpha}
\frac{\p(D=k+1)}{\p(D=k)}=\frac{k-\alpha}{k+1},\quad k \ge m.
\end{equation}
or equivalently, shifting the dummy variable $k$ by 1,
\begin{equation}\label{our ratio}
\frac{\p(D=k)}{\p(D=k-1)}=\frac{k-\beta}{k},\quad k>m.
\end{equation}

The Yule--Simon distribution for power-law-$\beta$ has point
probabilities given by $\p(D=k) = (\beta-1) \, \Gamma(k)
\Gamma(\beta)/\Gamma(k+\beta)$, and hence ratios 
\begin{equation}\label{yule ratio}
   \frac{\p(D=k)}{\p(D=k-1)}= \frac{k-1}{k-1+\beta}. 
\end{equation}
    In comparison with \eqref{our ratio}, both
formulas have denominator minus numerator $= \beta$, for every $k$, but 
for non-integer $\beta$, \eqref{our ratio}
has the integer in the denominator, while the Yule--Simon ratio \eqref{yule ratio} has the integer
in the numerator.

\begin{remark}\label{sect
    iterate}
For each $m=1,2,\ldots$, it is true that for all $p,q \in (0,1)$ one
has $T_q \circ T_p = T_{pq}$; we omit the easy proof.
It then follows that  the $k$-fold iterate $(T_q)^k$ 
of $T_q $ is $T_p$ with $p=q^k$.  
\ignore{
It may or may not be intuitively obvious that the four steps:
$p$-thinning to get $S_D$, then conditioning on $S_D \ge m$ to get $T_p D$,
then $q$-thinning --- to get say $Y$, and conditioning again, on $Y
\ge m$, to get $T_q T_p D$,
have the same effect as the
two steps:  $pq$-thin, then condition, to get $T_{pq} D$.   
The intuition is reasonable, 
since
the event $Y \ge m$ is a subset of the event $S_D \ge m$.  
For the case $m=1$, an easy way to see that $T_p $ followed by $T_q $ equals $T_{pq} $ 
is to use the  ``doubly complemented''
generating
functions from the end of Section \ref{sect p coin}, 
for which the
distribution of  $T_p D \equiv T_{p,1} D$ is determined by
\begin{equation}\label{iterate m=1}
   N_{T_p D}(s) = N_D(ps) \, / \, N_D(p).
\end{equation}
This allows a proof via the calculation
$$
N_{T_q T_p D}(s) =  \frac{N_{T_p D}(qs)}{ N_{T_p D}(q)} =
 \frac{ N_D(pqs) \, / \, N_D(p)}{ N_D(pq) \, / \, N_D(p)} = 
\frac{ N_D(pqs) }{ N_D(pq)} =  N_{T_{pq} D}(s).
$$
That $T_q \circ T_p = T_{pq}$  for all $m$ is true, and can be proved
using a well-known coupling $p$-coins, $q$-coins, and $pq$-coins; we
omit the details of this coupling proof.

From $T_s \circ T_t = T_{st}$ it follows that the $k$-fold iterate $(T_q)^k$ 
of $T_q $ is $T_p$ with $p=q^k$.  
}
Theorem 
\ref{thm converge 2} 
allows $p \to 0$ with only the restriction $p>0$, 
and the special
case
where $p$ goes to zero along a geometric sequence $q^k$ yields
convergence for iterates of the transformation $T_q$, for one fixed $q$.
\end{remark}

\section{Uniqueness}\label{sect unique}

The goal is to show that, for $m=1,2,\ldots$,
any distribution $D$ on the nonnengative integers
 which is unchanged by $p$-thinning
followed by conditioning on being at least $m$, for all
$p \in (0,1)$,
is either
the constant $D \equiv m$ or else, as specified by \eqref{our ratio}, the law with $1 <\beta <m+1$
and ratios
$\p(D=k)/\p(D=k-1)=(k-\beta)/k$
for $k \ge m+1$.   

\begin{lemma}\label{trivial lemma}
 Suppose  $A$ and $B$ are two nonnegative integer valued random
variables with probability generating functions $\M_A$, $\M_B$.
Let $m$ be a positive integer.   Assume $\p(A \ge m) > 0$ and
$\p(B \ge m) > 0$. Consider the statements

\begin{description}

\item[(a)] $\p(A=k)=\p(B=k)$ for all $k \ge m$.

\item[(b)]  $(A|A\ge m)$ and $(B|B \ge m)$ have the same distribution.  

\item[(c)] $\M_A^{(m)}(s)=\M_B^{(m)}(s)$
for all $s \in [0,1)$.

\item[(d)] $\M_A^{(m)}(s)=c \, \M_B^{(m)}(s)$
for all $s \in [0,1)$, for some constant $c>0$.

\end{description}
(Here $G_A^{(m)}(s)$ denotes the $m$th derivative of $G_A(s)$.) Then (a) if and only if (c),   and (b) if and only if (d).

\end{lemma}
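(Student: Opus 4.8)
The plan is to reduce both equivalences to the explicit power-series expansion of the $m$th derivative of a probability generating function, together with uniqueness of power-series coefficients. The starting observation is that differentiating $G_A(s)=\sum_{k\ge 0}\p(A=k)s^k$ term by term $m$ times (legitimate inside the radius of convergence, which is at least $1$) gives
$$
G_A^{(m)}(s)=\sum_{k\ge m}(k)_m\,\p(A=k)\,s^{k-m},\qquad s\in[0,1),
$$
and likewise for $B$. Thus $G_A^{(m)}$ is itself a power series whose coefficient of $s^{k-m}$ is $(k)_m\,\p(A=k)$, where crucially $(k)_m=k(k-1)\cdots(k-m+1)>0$ for every $k\ge m$. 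Note that only the probabilities $\p(A=k)$ with $k\ge m$ survive the $m$-fold differentiation, which is exactly why statements (a) and (c) involve only the range $k\ge m$.

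For (a) $\iff$ (c): If (a) holds, the two power series $G_A^{(m)}$ and $G_B^{(m)}$ have identical coefficients and hence agree on $[0,1)$, which is (c). Conversely, if (c) holds, then two power series agree on $[0,1)$; by uniqueness of power-series coefficients their coefficients coincide, so $(k)_m\,\p(A=k)=(k)_m\,\p(B=k)$ for all $k\ge m$, and dividing by the positive factor $(k)_m$ yields (a).

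For (b) $\iff$ (d): First I record the elementary reformulation of (b). Writing $\p(A=k\mid A\ge m)=\p(A=k)/\p(A\ge m)$ for $k\ge m$, statement (b) is equivalent to the existence of a positive constant with $\p(A=k)=c\,\p(B=k)$ for all $k\ge m$; summing this relation over $k\ge m$ forces $c=\p(A\ge m)/\p(B\ge m)$, so the constant is uniquely determined and positive (here the hypotheses $\p(A\ge m)>0$ and $\p(B\ge m)>0$ guarantee the conditional laws are well defined and that $c$ is genuinely positive). Given this, the argument is precisely the one above with a multiplicative constant inserted: the relation $\p(A=k)=c\,\p(B=k)$ for all $k\ge m$ matches the coefficients of $G_A^{(m)}$ with those of $c\,G_B^{(m)}$, which by power-series uniqueness is equivalent to $G_A^{(m)}(s)=c\,G_B^{(m)}(s)$ on $[0,1)$, i.e. (d).

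I expect no serious obstacle. The only point requiring care is the justification that agreement of the two analytic functions on the half-open interval $[0,1)$ forces equality of all coefficients — handled either by the identity theorem applied on the open subinterval $(0,1)$, or by the inductive argument of evaluating at $s=0$, dividing the difference by $s$, letting $s\to 0^+$, and repeating — together with the bookkeeping that in (b) $\iff$ (d) a single constant $c$ works throughout.
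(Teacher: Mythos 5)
Your proof is correct and follows essentially the same route as the paper's: term-by-term differentiation of the generating functions to identify the coefficients of $G^{(m)}$ as $(k)_m\,\p(\cdot=k)$, uniqueness of power-series coefficients for (a) $\Leftrightarrow$ (c), and the reformulation of (b) as proportionality of the point probabilities on $\{k\ge m\}$ for (b) $\Leftrightarrow$ (d). Your explicit identification of the constant $c=\p(A\ge m)/\p(B\ge m)$ is the correct orientation for the relation $G_A^{(m)}=c\,G_B^{(m)}$ as stated in (d).
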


\begin{proof}

Let  $a_k := \p(A=k)$ and $b_k=\p(B=k)$ so that 
$\M_A(s)=\sum_{k \ge 0} a_k s^k$ and likewise for $\M_B$.
These are power series with radius of convergence $\ge 1$, hence
differentiable term-by-term, with
$\M_A^{(m)}(s)=\sum_{k \ge m} k_{(m)} a_k s^{k-m}$ for $|s|<1$,
and likewise for $\M_B$.
This immediately shows that (a) implies (c);  
to see that (c) implies (a), given $k \ge m$, differentiate $k-m$ times and evaluate at
$s=0$.

The equivalence of (b) and (d) follows, 
with $c=\p(B \ge m)/\p(A \ge m)$.

\end{proof}

We apply this with $A=D$ and $B=S_D$. We are looking for a fixed point
of $D \mapsto T D$, where $T D\equiv T_{p,m} D:=(S_D|S_D \ge m)$ and
$S_D$ is the $p$-thinning of $D$.
Since $1=\p(TD \ge m)$, we can have $D$ and $T D$ equal in distribution
only
if $1=\p(D \ge m)$.   Thus we assume that $1=\p(D \ge m)$, so that
$D = (D|D \ge m)$, and now we have a fixed point of
$D \mapsto T D$ if and only if $(D|D \ge m) = (S_D|S_D \ge m)$.  
Combine Lemma \ref{trivial lemma} with  \eqref{M thinned}, 
so that the two generating functions of interest are
$\M_A(s) = \M(s)$ and $\M_B(s) =\M(1-p(1-s))$.

 Write $f$ for the $m$th derivative
of $\M$, so that $\M_B^{(m)}(s)=(\M(1-p+ps))^{(m)}  
=p^m \, f(1-p(1-s))$.  Assuming that $1=\p(D \ge m)$,
we have a fixed point of $D \mapsto T_{p,m} D$ if and only if
$$
f(s)=c \, p^m \times f(1-p(1-s)), \ \ \mbox{for all } s \in [0,1).
$$

\begin{lemma}\label{linear lemma}
Let $f$ be a continuous function from $[0,1)$ to
$(0,\infty)$, with $f(0)=1$,
and let $p \mapsto c(p)$ be any function on $(0,1)$.  If
\begin{equation}\label{f}
\forall p \in (0,1), 
\forall s \in [0,1), \ \ \  \ \ f(1-p(1-s)) = c(p) f(s),
\end{equation}
then for some constant $d$ we have $f(s)=(1-s)^{-d}$.
\end{lemma}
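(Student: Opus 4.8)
The plan is to change variables so that the functional equation \eqref{f} becomes the multiplicative form of Cauchy's equation, whose continuous positive solutions are exactly the power functions. First I would set $u = 1-s$, so that $u$ ranges over $(0,1]$ as $s$ ranges over $[0,1)$, and define $g(u) := f(1-u)$. Since $1 - p(1-s) = 1 - pu$, the hypothesis \eqref{f} reads $g(pu) = c(p)\, g(u)$ for all $p \in (0,1)$ and $u \in (0,1]$. The normalization $f(0)=1$ becomes $g(1)=1$, and $g$ is continuous and strictly positive because $f$ is.

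Next I would pin down the unknown factor $c(p)$ by evaluating at $u=1$, which gives $c(p) = g(p)$. Substituting this back, the equation collapses to $g(pu) = g(p)\,g(u)$ for all $p,u \in (0,1]$ (the excluded value $p=1$ being trivial, since $g(1)=1$). This is precisely the multiplicative Cauchy equation on the semigroup $(0,1]$.

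To solve it, I would pass to logarithms. Because $g>0$, the function $h := \log g$ is well defined and continuous on $(0,1]$ and satisfies $h(pu) = h(p) + h(u)$. Writing $u = e^{-x}$ with $x \ge 0$ and setting $\phi(x) := h(e^{-x})$ converts this into the additive Cauchy equation $\phi(x+y) = \phi(x) + \phi(y)$ for $x,y \ge 0$, with $\phi$ continuous and $\phi(0)=0$. The continuous solutions are the linear ones, $\phi(x) = d\,x$ for a constant $d$, whence $h(u) = \phi(-\log u) = -d\log u$, i.e. $g(u) = u^{-d}$. Undoing the substitution gives $f(s) = g(1-s) = (1-s)^{-d}$, as claimed.

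The only real subtlety, and the step I expect to need care, is the appeal to continuity: Cauchy's equation admits pathological non-measurable solutions, so the conclusion that $\phi$ is linear genuinely uses the continuity of $f$ assumed in the statement. Beyond that, one must simply keep track of the half-open domains, namely $p \in (0,1)$ strictly and $u \in (0,1]$, to ensure the reduced equation holds on the full multiplicative semigroup $(0,1]$ and that $\phi$ is defined on all of $[0,\infty)$.
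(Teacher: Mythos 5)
Your proposal is correct and follows essentially the same route as the paper's proof: the substitution $g(u)=f(1-u)$, evaluation at $u=1$ to identify $c(p)=g(p)$, and reduction to the multiplicative Cauchy equation $g(pu)=g(p)g(u)$. The only difference is that you spell out the final step (logarithms, additive Cauchy equation, continuity forcing linearity), which the paper compresses into the single sentence ``It follows that $g(u)=u^{-d}$ for some $d$.''
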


\begin{proof}
First let $s=1-t$ so that \eqref{f} becomes
$$
\forall p \in (0,1), 
\forall t \in (0,1], \ \ \  \ \ f(1-pt) = c(p) f(1-t),
$$
and then consider $g(t) := f(1-t)$ so that the system to solve
becomes
\begin{equation}\label{g}
\forall p \in (0,1), 
\forall t \in (0,1], \ \ \  \ \ g(pt) = c(p) g(t),
\end{equation}
with $g(1)=1$.
Plugging in $t=1$ we see that $c(p)=g(p)$, and \eqref{g}
becomes $g(pt)=g(p)g(t)$.  It follows that $g(u)=u^{-d}$ for some $d$.
\end{proof}

\begin{proof}[Proof of Theorem \ref{thm unique}]
Start by assuming that $D$ is a fixed point.  
We combine Lemmas \ref{trivial lemma} and \ref{linear lemma}, as in
the remarks before Lemma \ref{linear lemma}, so that $\M(s)=\e s^D$, $f$ is
the $m$th derivative of $\M$, and
the conclusion of Lemma \ref{linear lemma} applied to $f(s)/f(0)$ is that 
$f(s)=c \, (1-s)^{-d}$ with $c>0$. [We have $c=f(0)>0$ because 
$\p(D \ge m)>0$ implies $\p(S_D = m)>0$, hence $\p(D=m|D \ge m) =
\p(S_D=m |S_D \ge m)>0$, hence $c=m! \p(D=m)>0$.]

In case $d=0$, we have $f$ is constant and $D \equiv m$.  We cannot
have $d$ negative, since then the coefficient of $s^1$ in $f$ is $d$,
while $\M$ has nonnegative coefficients.  In case $d>0$,
writing $[s^k]f(s)$ for the coefficient of $s^k$ in $f$,
so that $[s^k]\M(s)=\p(D=k)$, we have for $k \ge m$
$$
 k_{(m)} \p(D=k) = [s^{k-m}]f(s) =  [s^{k-m}] (c \, (1-s)^{-d}) =
c \, (-1)^{k-m} \, \frac{ (-d)_{(k-m)}}{(k-m)!}.
$$
Hence for $k \ge m$
$$
\p(D=k)  = c \, (-1)^{k-m} \, \frac{ (-d)_{(k-m)}}{k!}
$$
and 
$$
\frac{\p(D=k+1)}{\p(D=k)} = \frac{-(-d-(k-m))}{k+1} = \frac{k-\alpha}{k+1},
$$
with $\alpha = m-d <m$.
The requirement $\sum \p(D=k) < \infty$ implies that $\alpha > 0$.

The implication in the opposite direction is easy, again by
combining Lemmas \ref{trivial lemma} and \ref{linear lemma}.
\end{proof}

\section{Convergence to nontrivial fixed points}
\label{sect converge}

Before proving Theorem \ref{main theorem}, we state part of a Tauberian theorem that can
be found on page 447 of \cite{fellerII}. Many other Tauberian theorems
can be found in  \cite{regular}.
\begin{theorem}
Let $q_l\geq 0$ and suppose
$Q(s)=\sum_{l=0}^{\infty}q_l s^l$
converges for $0\leq s <1$. If $L$ is slowly varying, $\rho> 0$,
and 
$q_l\sim l^{\rho-1}L(l),$
then
$$Q(s)\sim\frac{\Gamma(\rho)}{(1-s)^{\rho}}L\bigg(\frac 1{1-s}\bigg)
\text{ as } s\uparrow 1. $$
\end{theorem}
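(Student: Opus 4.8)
The plan is to prove this as the Abelian half of Karamata's theorem, passing from the asymptotics of the coefficients $q_l$ to those of the power series $Q$. First I would make the substitution $s = e^{-1/x}$, so that $s \uparrow 1$ corresponds to $x \to \infty$; since $1-s = 1 - e^{-1/x} \sim 1/x$, we have $(1-s)^{-\rho} \sim x^{\rho}$ and, by slow variation, $L(1/(1-s)) \sim L(x)$. Thus it suffices to show
\begin{equation}
  Q(e^{-1/x}) = \sum_{l=0}^{\infty} q_l\, e^{-l/x} \sim \Gamma(\rho)\, x^{\rho} L(x) \quad\text{as } x \to \infty.
\end{equation}
As a warm-up that fixes the constant $\Gamma(\rho)$, consider $L \equiv 1$: here $q_l \sim l^{\rho-1}$, and since the coefficients $r_l := \binom{l+\rho-1}{l} = \Gamma(l+\rho)/(\Gamma(\rho)\Gamma(l+1))$ of $(1-s)^{-\rho}$ satisfy $r_l \sim l^{\rho-1}/\Gamma(\rho)$ by Stirling, we have $q_l \sim \Gamma(\rho)\, r_l$. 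A standard comparison lemma --- if $a_l \sim b_l$ with $b_l \ge 0$ and $\sum b_l s^l \to \infty$, then $\sum a_l s^l \sim \sum b_l s^l$, proved by splitting off a finite head and squeezing the diverging tail between $(1\pm\epsilon)$ multiples --- then yields $Q(s) \sim \Gamma(\rho)(1-s)^{-\rho}$.

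For general slowly varying $L$ there is no ready-made comparison series, so I would treat the normalized sum as a Riemann sum. Dividing through by $x^{\rho} L(x)$,
\begin{equation}
  \frac{Q(e^{-1/x})}{x^{\rho} L(x)} = \frac1x \sum_{l=1}^{\infty} \frac{q_l}{x^{\rho-1}L(x)}\,e^{-l/x}.
\end{equation}
Writing $u = l/x$ and factoring $\dfrac{q_l}{x^{\rho-1}L(x)} = \dfrac{q_l}{l^{\rho-1}L(l)}\, u^{\rho-1}\,\dfrac{L(xu)}{L(x)}$, the first factor tends to $1$ as $l \to \infty$, and by the Uniform Convergence Theorem for slowly varying functions $L(xu)/L(x) \to 1$ uniformly for $u$ in compact subsets of $(0,\infty)$. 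Thus each summand behaves like $\frac1x\, u^{\rho-1}e^{-u}$ on a mesh of width $1/x \to 0$, and the sum should converge to $\int_0^{\infty} u^{\rho-1}e^{-u}\,du = \Gamma(\rho)$, exactly the claimed constant.

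The main obstacle is making this last passage rigorous, i.e.\ justifying a dominated-convergence statement for the sum uniformly in $x$. Here I would invoke Potter's bounds: for any $\delta \in (0,\rho)$ there is a constant $C$ with $L(xu)/L(x) \le C\max(u^{\delta}, u^{-\delta})$ for all large $x$, so the summand is dominated by $C\, u^{\rho-1-\delta}e^{-u}$ near $u=0$ and by $C\, u^{\rho-1+\delta}e^{-u}$ for large $u$, both integrable (the choice $\delta<\rho$ secures integrability at the origin). This supplies the uniform integrability needed to exchange limit and summation. Two smaller points must also be dispatched: the hypothesis $q_l \sim l^{\rho-1}L(l)$ holds only as $l \to \infty$, so the finite head over small $l$ contributes an $O(1)$ amount that is negligible after division by $x^{\rho}L(x) \to \infty$; and the translation back from $x$ to $1/(1-s)$ uses only $1-s \sim 1/x$ together with the slow variation of $L$. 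Assembling these pieces delivers the stated asymptotic $Q(s) \sim \Gamma(\rho)(1-s)^{-\rho}L(1/(1-s))$.
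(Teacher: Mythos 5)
Your proposal is mathematically sound, but note first that the paper itself offers no proof of this statement: it is quoted as a known Tauberian/Abelian theorem from page 447 of Feller, volume II, and is used as a black box in the proof of Theorem 2. So the comparison is really with Feller's proof, not the paper's. Against that benchmark, your route is a legitimate and somewhat more direct alternative: Feller works at the level of measures and Laplace transforms, proving the equivalence for the cumulative sums $Q_n = q_0 + \cdots + q_n$ (where monotonicity gives the needed domination almost for free) and then transferring to the power series, whereas you attack the series $\sum q_l s^l$ head-on as a Riemann sum after the substitution $s = e^{-1/x}$. The price of your directness is that you must manufacture the domination yourself, and you correctly identify the two tools that do it: the Uniform Convergence Theorem for $L(xu)/L(x)$ on compact subsets of $(0,\infty)$, and Potter's bounds with $\delta < \rho$ to secure integrability of $u^{\rho-1-\delta}e^{-u}$ at the origin. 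Two small points you should make explicit if you write this up fully: Potter's bound requires \emph{both} arguments large, i.e.\ it applies only for $l \ge l_0$, not merely for $x$ large --- but the same finite head $l \le l_0$ that you already split off (negligible because $x^{\rho}L(x) \to \infty$ for $\rho > 0$) covers this; and the "dominated convergence for sums" step is cleanest done by splitting $u \in (0,\epsilon] \cup [\epsilon, M] \cup [M,\infty)$, where the two extreme ranges are controlled by comparing the Riemann sums of the monotone dominating functions $u^{\rho-1\mp\delta}e^{-u}$ with their integrals (the head contributing $O(\epsilon^{\rho-\delta})$, the tail $O\bigl(\int_M^\infty u^{\rho-1+\delta}e^{-u}\,du\bigr)$), and the middle range converges by uniform continuity. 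With those details filled in, your argument is a complete and correct proof of the cited theorem.
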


\begin{proof} [Proof of Theorem \ref{main theorem}.]

Write $H(k)=\p(D\geq k)$, so that \eqref{tail slow} gives  $H(k)=k^{1-\beta}L(k)$, where $L$ is slowly varying. Sum by parts, make a change of variables in the second sum below, and apply the Tauberian theorem to each of the resulting sums. 
By (\ref{was 1}),

\ignore{
$$\begin{aligned}\sum_{l=k}^\infty l(l-1)&\cdots(l-k+1)(1-p)^l\p(D=l)=\sum_{l=k}^\infty l(l-1)\cdots(l-k+1)(1-p)^l[H(l)-H(l+1)]\\
&=\sum_{l=k}^\infty l(l-1)\cdots(l-k+1)(1-p)^lH(l)-\sum_{l=k+1}^\infty (l-1)\cdots(l-k)(1-p)^{l-1}H(l)\\
&=k!(1-p)^kH(k)+\sum_{l=k+1}^\infty (l-1)\cdots(l-k+1)(k-lp)(1-p)^{l-1}H(l)\\
&=k!(1-p)^kH(k)+k\sum_{l=k+1}^\infty (l-1)\cdots(l-k+1)(1-p)^{l-1}H(l)\\
&\hskip 1in -p\sum_{l=k+1}^\infty l(l-1)\cdots(l-k+1)(1-p)^{l-1}H(l)\\
&\sim k\Gamma(k-\beta+1)p^{\beta-k-1}L(p^{-1})-\Gamma(k-\beta+2)p^{\beta-k-1}L(p^{-1})\\
&=\Gamma(k-\beta+1)(\beta-1)p^{\beta-k-1}L(p^{-1}),\end{aligned}$$
{\bf  REPEAT, so that one can check typesetting}
}

$$\begin{aligned}P(S_D=k)k!\bigg(\frac{1-p}p\bigg)^k&=\sum_{l=k}^\infty (l)_k(1-p)^l\p(D=l)\\&=\sum_{l=k}^\infty (l)_k(1-p)^l[H(l)-H(l+1)]\\
&=\sum_{l=k}^\infty (l)_k(1-p)^lH(l)-\sum_{l=k+1}^\infty (l-1)_k(1-p)^{l-1}H(l)\\
&=k!(1-p)^kH(k)+\sum_{l=k+1}^\infty (l-1)_{k-1}(k-lp)(1-p)^{l-1}H(l)\\
&=k!(1-p)^kH(k)+k\sum_{l=k+1}^\infty (l-1)_{k-1}(1-p)^{l-1}H(l)\\
&\hskip 1in -p\sum_{l=k+1}^\infty (l)_k(1-p)^{l-1}H(l)\\
&\sim k\Gamma(k-\beta+1)p^{\beta-k-1}L(p^{-1})-\Gamma(k-\beta+2)p^{\beta-k-1}L(p^{-1})\\
&=\Gamma(k-\beta+1)(\beta-1)p^{\beta-k-1}L(p^{-1}),\end{aligned}$$
provided that $k-\beta+1>0$. This gives \eqref{was 3} if $k>\beta$. If $k=\beta$, the above computation with $k$ replaced by $k-1$ gives
$$\sum_{l=k-1}^\infty l(l-1)\cdots(l-k+2)(1-p)^l\p(D=l)\sim(k-1)!H(k-1)+(k-1)L^*(p^{-1}),$$
so \eqref{was 3} holds in this case as well.
\end{proof}

Convergence of
the ratios of probabilities in \eqref{was 3} does not immediately imply tightness of the
distributions of $(S_D\mid S_D\geq m)$ as $p\downarrow 0$. This tightness is
needed to conclude that the iterates of the transformation converge to the
appropriate fixed point.
We therefore now turn our attention to that issue.

\begin{proof}[Proof of Theorem \ref{thm converge 2}]
 Tightness of these conditional distributions means that
\begin{equation}\label{was 5}
\lim_{k\rightarrow\infty}\limsup_{p\rightarrow 0+}\frac {\p(S_D\geq k)}
{\p(S_D\geq m)}=0.\end{equation}
Thus we need to deduce the asymptotics of ratios of tail probabilities from
the asymptotics of ratios of point probabilities. 

A key identity that allows
for this transition is 
\begin{equation}\label{was 6}
\frac d{dp}\p(S_D\geq k)=kp^{-1}\p(S_D=k).\end{equation}
Students of the theory of percolation will recognize this as a very
simple form of Russo's formula -- see page 35 of \cite{grimmett}, 
for
example. 
 The proof of \eqref{was 6} 
 is also simple:
Use \eqref{was 1} to write
\begin{equation}\label{was 7}
\p(S_D\geq k)=\sum_{l=k}^{\infty}\p(D=l)\bigg[1-\sum_{n=0}^{k-1}\binom ln
p^n(1-p)^{l-n}\bigg].\end{equation} 
Differentiating gives
$$\frac d{dp}\p(S_D\geq k)=p^{-1}\sum_{l=k}^{\infty}\p(D=l)\sum_{n=0}^{k-1}
\binom ln p^n(1-p)^{l-n-1}(lp-n).$$
To prove \eqref{was 6} 
one needs to check
\begin{equation}\label{was 8}
\sum_{n=0}^{k-1}
\binom ln p^n(1-p)^{l-n-1}(lp-n)=k\binom lk p^k(1-p)^{l-k}.\end{equation}
The easiest way to check this is to note that the two sides of
\eqref{was 8} 
agree for $k=0$, and  differences of the two sides of \eqref{was 8} 
for successive values of $k$ also agree. 
\ignore{
(A probabilistic proof of
\eqref{was 6} 
can be given via a coupling argument: Let $U_1, U_2,...$ be i.i.d.
random variables that are uniformly distributed on $[0,1]$. Take the
Bernoulli random variables $X_i$ that are used to define the $p$-thinning
$S_D$ to be $X_i=1_{U_i\leq p}$. This constructs $S_D$ for different
$p$'s on a common probability space, and hence allows one to write the
difference quotient whose limit is the left side of \eqref{was 6} 
in terms of
the probability of an event on that probability space.)
}

By L'Hospital's Rule, whenever \eqref{was 3} holds, it follows from
\eqref{was 6} 
that
\begin{equation}\label{was 9}
\lim_{p\rightarrow 0+}\frac{\p(S_D\geq k)}{\p(S_D\geq
    k-1)}=\frac{k-\beta}{k-1}.
\end{equation}
Using \eqref{was 9} 
repeatedly gives
$$\lim_{p\rightarrow 0+}\frac{\p(S_D\geq m+k)}{\p(S_D\geq m)}=\prod_{j=1}^k\frac{m+j-\beta}
{m+j-1}.$$
Now 
\eqref{was 5} follows from this and the fact that
$\sum_j(\beta-1)(m+j-1)=\infty.$
\end{proof}

\section{Convergence to trivial fixed points}\label{sect trivial}

Next we consider what happens in the less interesting regime
$m<\beta-1$.
 
 \medskip

\begin{remark} If \eqref{tail slow} 
holds with $m=\beta-1$, then Theorems \ref{main theorem} and 
\ref{thm converge 2}
provide the conclusion of Theorem \ref{theorem trivial} even though $ED^m$ may be infinite.
\end{remark}
\begin{proof}[Proof of Theorem \ref{theorem trivial}]
From \eqref{was 1} 
with $n=k-1$ and the
  dominated convergence 
theorem, we see that
\begin{equation}\label{was called 9}
\p(S_D=k-1)\sim p^{k-1}E\binom D{k-1}\quad\text{as }p\downarrow
0.\end{equation} 
We need to show that
$$\lim_{p\rightarrow 0}\frac{\p(S_D\geq k)}{p^{k-1}}=0.$$
This will follow from \eqref{was 7} 
and the dominated convergence theorem provided
that

\begin{equation}\label{was called 11} 1-\sum_{n=0}^{k-1}\binom ln
p^n(1-p)^{l-n}\leq C(lp)^{k-1}\end{equation}
for some $C$ depending only on $k$, and
\begin{equation}\label{was called 12} 1-\sum_{n=0}^{k-1}\binom ln
p^n(1-p)^{l-n}=o(p^{k-1})\end{equation}
as $p\rightarrow 0$ for each $l$. 
Both \eqref{was called 11} and \eqref{was called 12} 
follow from
\begin{equation}\label{was called 13} 1-\sum_{n=0}^{k-1}\binom ln
p^n(1-p)^{l-n}\leq C(lp)^{k}\end{equation}
for some (different) constant $C$, again depending only on $k$; \eqref{was called 13}
is  a Chernoff bound; see \cite[formula (12)]{hagerup}.
That 
\eqref{was called 12} 
follows from \eqref{was called 13} 
is immediate. To deduce \eqref{was called 11} 
from \eqref{was called 13} 
write 
\begin{equation}\label{was called 14}
1-\sum_{n=0}^{k-1}\binom ln
p^n(1-p)^{l-n}=1-\sum_{n=0}^{k-2}\binom ln
p^n(1-p)^{l-n}-\binom l{k-1}p^{k-1}(1-p)^{l-k+1}\end{equation}
and apply \eqref{was called 13} 
to the first part of \eqref{was called 14} 
with $k$ replaced by
$k-1$.
 
The final statement follows from \eqref{was called 9} 
with $k=m+1$.
\end{proof}

\section{Infinite divisibility}

We will show that the distributions in \eqref{G at m} 
are \emph{infinitely divisible};  this is relatively easy, 
thanks to a result from renewal theory.

\begin{prop}\label{inf prop} Suppose the sequence $\{u(n), n\geq 0\}$ satisfies $u(0)=1$,
\begin{equation}\label{hyp}u(n)>0, u(n-1)u(n+1)\geq u^2(n)\text{ for }n\geq 1\text{ and }\lim_n\frac{u(n)}{u(n+1)}>0.\end{equation}
Let 
\begin{equation}\label{series}\log\bigg(\sum_{n=0}^\infty u(n)\x^n\bigg)=\sum_{n=1}^\infty \lambda(n)\x^n.\end{equation}
Then $\lambda(n)\geq 0$ for  $n\geq 1$.
\end{prop}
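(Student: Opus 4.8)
The plan is to recognize the hypotheses as exactly the condition under which $\{u(n)\}$ is (a multiple of) a renewal sequence, and then to read off the sign of the $\lambda(n)$ from the renewal representation. Write $U(s)=\sum_{n\ge 0}u(n)s^n$. Log-convexity, $u(n-1)u(n+1)\ge u(n)^2$, says precisely that the ratios $u(n+1)/u(n)$ are nondecreasing; hence $u(n)/u(n+1)$ is nonincreasing, and the hypothesis $\lim_n u(n)/u(n+1)>0$ forces $u(n+1)/u(n)$ to increase to a finite limit $L$. By the ratio test $U$ has radius of convergence $1/L>0$, so on $[0,1/L)$ the function $U$ is analytic and, since $U(0)=u(0)=1>0$ and all $u(n)>0$, strictly positive. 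Thus $\log U(s)$ is analytic near $0$ and $\sum_{n\ge 1}\lambda(n)s^n$ is its genuine Maclaurin series (with constant term $\log U(0)=0$).

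Next I would pass to the reciprocal series. Define $F(s):=1-1/U(s)$, equivalently $U(s)=1/(1-F(s))$; since $F(0)=1-1/U(0)=0$ we may write $F(s)=\sum_{n\ge 1}f(n)s^n$, and comparing coefficients in the identity $U(s)\,(1-F(s))=1$ gives the renewal recursion $u(n)=\sum_{k=1}^n f(k)\,u(n-k)$ for $n\ge 1$. The crucial point is that $f(n)\ge 0$ for every $n\ge 1$. This is exactly the content of the renewal-theory result being invoked: Kaluza's theorem, which asserts that the reciprocal of a normalized log-convex power series has nonpositive coefficients in every degree $\ge 1$, so that $1/U(s)=1-\sum_{n\ge 1}f(n)s^n$ with $f(n)\ge 0$. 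Equivalently, a normalized log-convex sequence is a (possibly defective) renewal sequence.

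With $f(n)\ge 0$ in hand, the conclusion is immediate. Because $F(0)=0$ and $F$ is continuous, $0\le F(s)<1$ for $s$ in a neighborhood $[0,\delta)$ of $0$, so
$$
\log U(s)=-\log\big(1-F(s)\big)=\sum_{j=1}^\infty \frac 1j\,F(s)^j .
$$
Each $F(s)^j$ is a product of power series with nonnegative coefficients and hence has nonnegative coefficients, and the series converges as a power series near $0$; therefore $\lambda(n)=[s^n]\log U(s)\ge 0$ for all $n\ge 1$, as claimed.

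I expect the only genuine difficulty to be the nonnegativity $f(n)\ge 0$, which is precisely Kaluza's theorem on reciprocals of log-convex power series; I would simply quote this as the renewal-theory input that the authors refer to, after checking the one-line base cases ($f(1)=u(1)>0$ and $f(2)=u(2)-u(1)^2\ge 0$) to confirm the normalization matches. Everything else — the ratio-test bookkeeping for the radius of convergence, the verification that $F$ is small near $0$, and the nonnegativity of the coefficients of $\sum_j F(s)^j/j$ — is routine.
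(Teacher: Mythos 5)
Your proof is correct and follows essentially the same route as the paper: introduce the renewal sequence $f$ via $U=1/(1-F)$, quote Kaluza's theorem for $f(n)\geq 0$, and expand $\log U=-\log(1-F)=\sum_j F^j/j$ to read off $\lambda(n)\geq 0$. The extra bookkeeping you supply (radius of convergence, positivity of $U$ near $0$, the base cases for $f$) is fine but not a different argument.
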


\begin{proof} Let $\{f(n), n\geq 1\}$ be the sequence associated to $u(\cdot)$ by the renewal equation:
\begin{equation}\label{renewal}u(n)=\sum_{k=1}^nf(k)u(n-k),\end{equation}
and consider the two generating functions
$$U(\x)=\sum_{n=0}^\infty u(n)\x^n\text{ and } F(\x)=\sum_{n=1}^\infty f(n)\x^n.$$
Multiplying (\ref{renewal}) by $\x^n$ and summing for $n\geq 0$ gives
$$U(\x)=1+U(\x)F(\x), \text{ or equivalently } U(\x)=\frac1{1-F(\x)}.$$
Therefore, (\ref{series}) can be written as
$$\log [U(\x)]=-\log(1-F(\x))=\sum_{n=1}^\infty\frac{[F(\x)]^n}n.$$
Kaluza (\cite{kaluza}) proved that $f(k)\geq 0$ for all $k\geq 1$. (See  \cite[Theorem 1]{liggettkarlin} for generalizations of this statement; see also \cite{shanbhag}.)
Therefore the series in (\ref{series}) has nonnegative coefficients.
\end{proof}

The inequality in (\ref{hyp}) is known as log-convexity of the sequence $u$. There is a long history of connections between log-convexity  and infinite divisibility;  see \cite{steutel} \cite{wardekatti} and \cite[Thm. 51.3; Notes on p. 426]{sato}, for example.
\begin{cor} 
For $m=1,2,\ldots$, and $\alpha \in (0,m)$, the probability distribution for $D$ specified by 
\eqref{G at m} and \eqref{our ratio alpha} is infinitely divisible.
\end{cor}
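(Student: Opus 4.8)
The plan is to apply Proposition~\ref{inf prop} to the point probabilities of $D$, shifted so that they start at the origin. Write $p_k = \p(D=k)$ and set $u(n) := p_{n+m}/p_m$ for $n \ge 0$, which is legitimate since $\p(D=m)>0$. By \eqref{our ratio alpha} we have $p_{k+1}/p_k = (k-\alpha)/(k+1)$ for $k \ge m$, and because $\alpha < m \le k$ the numerator is strictly positive; hence every $u(n)>0$ and $u(0)=1$. The generating function $U(s) := \sum_{n \ge 0} u(n)s^n$ then satisfies $\e s^{D-m} = \sum_{n \ge 0} p_{n+m}s^n = p_m\,U(s)$, so proving infinite divisibility of $D$ amounts to analyzing $U$.

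First I would check the hypotheses \eqref{hyp}. The consecutive ratios
$$ r(n) := \frac{u(n+1)}{u(n)} = \frac{n+m-\alpha}{n+m+1} = 1 - \frac{1+\alpha}{n+m+1} $$
are strictly increasing in $n$, which is exactly the log-convexity inequality $u(n-1)u(n+1) \ge u(n)^2$; moreover $\lim_n u(n)/u(n+1) = \lim_n (n+m+1)/(n+m-\alpha) = 1 > 0$, and the radius of convergence of $U$ equals $1$. Thus Proposition~\ref{inf prop} gives $\log U(s) = \sum_{n\ge1}\lambda(n)s^n$ with all $\lambda(n) \ge 0$. Since $U(1) = \sum_{k\ge m}p_k/p_m = 1/p_m < \infty$, Abel continuity yields $\sum_n \lambda(n) = -\log p_m < \infty$, and therefore
$$ \e s^{D-m} = p_m\exp\Big(\sum_{n\ge1}\lambda(n)s^n\Big) = \exp\Big(\sum_{n\ge1}\lambda(n)(s^n-1)\Big). $$
This is the compound-Poisson form of the generating function of an infinitely divisible law on the nonnegative integers (equivalently, the log of the normalized generating function has nonnegative coefficients), so $D-m$ is infinitely divisible.

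The one place that needs care is passing from $D-m$ to $D$ itself. Because $\p(D=0)=0$, the law of $D$ is \emph{not} infinitely divisible inside the class of $\mathbb{Z}_{\ge0}$-valued laws, so the statement must be read in the usual sense of real (lattice) random variables, in which a deterministic shift of an infinitely divisible law is again infinitely divisible; adding back the constant $m$ then finishes the proof. I expect this bookkeeping, together with confirming that the correctly normalized sequence $u$ meets the hypotheses of Proposition~\ref{inf prop}, to be the only real work, as everything else is immediate from the ratio identity \eqref{our ratio alpha}.
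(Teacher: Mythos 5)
Your proposal is correct and follows essentially the same route as the paper: shift to $X=D-m$, normalize to $u(n)=\p(X=n)/\p(X=0)$, verify the log-convexity hypotheses of Proposition~\ref{inf prop} via the ratio $(n+m-\alpha)/(n+m+1)$, and conclude the compound-Poisson form. Your explicit remark that the deterministic shift by $m$ must be handled in the class of lattice (rather than $\mathbb{Z}_{\ge 0}$-valued) infinitely divisible laws is a point the paper leaves implicit, but it is not a different argument.
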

\begin{proof}
Let $X=D-m$, and  define $u(n)  = \p(X=n)/\p(X=0)$ for $n \ge 0$.  This yields
$$ 
  \sum_{n=0}^\infty u(n)\x^n = \bigg[1-(1-\x)^\alpha-\sum_{k=0}^{m-1}(-1)^k\frac{(\alpha)_k}{k!}\x^k\bigg]\bigg/(-1)^m\frac{(\alpha)_m}{m!}\x^m,
$$  
\ignore{
and
$$u(n)=(-1)^n\frac{m!(\alpha-m)(\alpha-m-1)\cdots(\alpha-n-m+1)}{(n+m)!}$$ $$=\frac{(m-\alpha)(m-\alpha+1)\cdots(m-\alpha+n-1)}{(m+1)(m+2)\cdots(m+n)}.$$
Then $u(0)=1$, $u(n)>0$ for all $n>0$ 
$$\frac{u(n)}{u(n+1)}=\frac{m+n+1}{m+n-\alpha},$$
}
so that $u(0)=1$, $u(n)>0$ for all $n>0$ and
\mbox{$u(n)/u(n+1)=(m+n+1)/(m+n-\alpha)$}, 
which is decreasing in $n$, so that (\ref{hyp}) is satisfied.  
The  probability generating function of $X$ is
$$
    G_X(\x) := \e \x^X = \p(X=0) \sum_{n=0}^\infty u(n)\x^n 
= \p(X=0) \exp\left( \sum_{n=1}^\infty \lambda(n)\x^n \right),
$$
and Proposition \ref{inf prop} shows that $\lambda(n) \ge 0$ for $n=1,2,\ldots$. 
Hence $X$ is equal in distribution to $\sum_{n \ge 1} n Z_n$, where $Z_1,Z_2,\ldots$ are independent, and $Z_n$ is Poisson distributed with parameter $\lambda(n)$.
\end{proof}

 \bibliographystyle{plain}
\bibliography{Thinbib}

\ignore{

}    

\end{document}